\documentclass[11pt]{amsart}
\usepackage{lscape}
\usepackage{amssymb}
\usepackage[margin=1in]{geometry}
\usepackage[colorlinks]{hyperref}
\usepackage{rotating}
\usepackage{amsmath}
\usepackage{tikz}
\usetikzlibrary{calc}

\oddsidemargin=.7in \evensidemargin=.7in

\textwidth=6.2in

\addtolength{\oddsidemargin}{-.70in}
\setlength{\evensidemargin}{\oddsidemargin}


\newtheorem{theorem}{Theorem}[section]
\newtheorem{lem}[theorem]{Lemma}
\newtheorem{prop}[theorem]{Proposition}
\newtheorem{cor}[theorem]{Corollary}

\theoremstyle{definition}
\newtheorem{definition}[theorem]{Definition}
\newtheorem{example}[theorem]{Example}

\theoremstyle{remark}
\newtheorem{remark}[theorem]{Remark}

\numberwithin{equation}{section}
\begin{document}

\newcommand{\spacing}[1]{\renewcommand{\baselinestretch}{#1}\large\normalsize}
\spacing{1.14}

\title[Two-step homogeneous geodesics]{Two-step homogeneous geodesics in some homogeneous Finsler manifolds}

\author { Masoumeh Hosseini }

\address{Masoumeh Hosseini\\ Department of Pure Mathematics \\ Faculty of  Mathematics and Statistics\\ University of Isfahan\\ Isfahan\\ 81746-73441-Iran.} \email{hoseini\_masomeh@ymail.com}

\author {Hamid Reza Salimi Moghaddam}

\address{Hamid Reza Salimi Moghaddam\\ Department of Pure Mathematics \\ Faculty of  Mathematics and Statistics\\ University of Isfahan\\ Isfahan\\ 81746-73441-Iran.\\ Scopus Author ID: 26534920800 \\ ORCID Id:0000-0001-6112-4259\\} \email{hr.salimi@sci.ui.ac.ir and salimi.moghaddam@gmail.com}

\keywords{homogeneous space, invariant Riemannian metric, invariant Finsler metric, homogeneous geodesic, $(\alpha,\beta)$-metric.\\
AMS 2020 Mathematics Subject Classification: 53C30, 53C60, 53C25, 22E60.}


\begin{abstract}
A natural extension of a homogeneous geodesic in homogeneous Riemannian spaces $G/H$, known as a two-step homogeneous geodesic, can be expressed of the form  $\gamma(t)=\pi(\exp(tx)\exp(ty))$, where $x$ and $y$ are elements of the Lie algebra of $G$. This paper aims to expand this concept to homogeneous Finsler spaces. We provide certain sufficient conditions for $(\alpha,\beta)$ spaces and decomposable cubic spaces to possess a one-parameter family of invariant Finsler metrics that can be classified as two-step Finsler geodesic orbit spaces. Additionally, we present some illustrative examples of these spaces.
\end{abstract}

\maketitle
\section{\textbf{Introduction}}
A homogeneous Riemannian manifold is defined as a connected Riemannian manifold $(M,h)$ where the largest
connected group of isometries $G$ acts transitively on $M$.
Such a manifold can be represented as the homogeneous space $G/H$, where $o$ is an arbitrary point of $M$
and $H$ is the isotropy group at $o$. There exists an $Ad(H)$-invariant decomposition
$\mathfrak {g}=\mathfrak{m}\oplus\mathfrak{h}$,
where $\mathfrak{g}$ and $\mathfrak{h}$ are the Lie algebras of $G$ and $H$, respectively,
and $\mathfrak{m}$ is a linear subspace of $\mathfrak{g}$. The tangent space $T_oM$ can be identified with
the subspace $\mathfrak{m}$ using the natural projection $\pi: G\longrightarrow G/H$.
A geodesic $\gamma(t)$ through the origin $o$ of $M=G/H$ is called a homogeneous geodesic if there exists
a nonzero vector $x\in\mathfrak{g}$ such that $\gamma(t)=\pi (\exp(tx))$ for $t\in\mathbb{R}$.
The nonzero vector $x$ is referred to as a geodesic vector. There is a one-to-one correspondence between
the set of geodesic vectors and the set of homogeneous geodesics through the origin $o$
(see \cite{Berestovskii-Nikonorov}, \cite{Kobayashi-Nomizu} and \cite{Kowalski-Szenthe}).
Kowalski and Vanhecke established that a vector $0\neq x\in\mathfrak{g}$ is a geodesic vector
if and only if $\langle x_\mathfrak{m},[x,z]_\mathfrak{m}\rangle=0$ for any $z\in\mathfrak{g}$,
where $\langle,\rangle$ is the inner product induced by the Riemannian metric $h$, and the subscript
$\mathfrak{m}$ denotes the projection into $\mathfrak{m}$ with respect to the decomposition
$\mathfrak{g}=\mathfrak{m}\oplus\mathfrak{h}$ (see proposition 2.1 of \cite{Kowalski-Vanhecke}).
In \cite{Kowalski-Szenthe}, Kowalski and Szenthe showed that every homogeneous Riemannian manifold admits a homogeneous geodesic through any point $o\in M$. This result is generalized to the case of pseudo-Riemannian manifolds by Dusek in \cite{Dusek1}. Yan and Deng generalized the result to Randers metrics \cite{Yan-Deng1}. Dusek proved the same result for odd-dimensional Finsler metrics \cite{Dusek2, Dusek3}. Yan and Huang proved the result in general regular Finsler spaces \cite{Yan-Huang}.
The result is generalized to a special type of non-regular Finsler metrics (Kropina metric) by the authors (see \cite{Hosseini-Salimi Moghaddam}).

In \cite{Arvanitoyeorgos-Panagiotis Souris}, Arvanitoyeorgos and Panagiotis Souris studied a generalization of the concept of homogeneous geodesic, of the form
\begin{equation}\label{two-step equation}
    \gamma(t)=\pi(\exp(tx)\exp(ty)), \ \ \ \ x, y \in\mathfrak{g},
\end{equation}
which is called two-step homogeneous geodesic. They gave sufficient conditions on homogeneous Riemannian manifolds to
admit two-step homogeneous geodesics. Also they studied two-step g.o. spaces. \\
In this paper, we extend this study to the case of Finsler spaces and investigate the properties of
two-step homogeneous geodesics. Section 2 provides some preliminaries on Finsler geometry.
Section 3 offers a brief overview of naturally reductive Finsler spaces and geodesic vectors in the
context of $(\alpha,\beta)$-metrics. In section 4, we present our main results, including the sufficient
conditions for $(\alpha,\beta)$ spaces and decomposable cubic spaces to be two-step Finsler g.o. spaces.
Additionally, we provide examples of such spaces.

\section{\textbf{Preliminaries}}
Let $M$ denote a smooth manifold with a continuous function $F:TM\longrightarrow [0,+\infty)$ that satisfies the following conditions:
\begin{enumerate}
  \item $F$ is differentiable on $TM\setminus{0}$,
  \item For any $x\in M$, $y\in T_xM$, and $\lambda\geq0$, $F(x,\lambda y)=\lambda F(x,y)$,
  \item For any $(x,y)\in TM\setminus{0}$, the Hessian matrix $(g_{ij}(x,y))=\big{(}\frac{1}{2}\frac{\partial^2F^2(x,y)}{\partial y^i\partial y^j}\big{)}$, where $(x^1,\cdots,x^n)$ is a local coordinate system on an open subset $U$ of $M$, and $(x^1,\cdots,x^n;y^1,\cdots,y^n)$ is the natural coordinate system on $TU$, is positive definite.
\end{enumerate}
Then the function $F$ and the pair $(M,F)$ are called a Finsler metric and a Finsler manifold, respectively.\\
A notable class of Finsler metrics is the family of $(\alpha,\beta)$-metrics, which are defined by combining a Riemannian metric and a one-form. Let $h$ be a Riemannian metric on the manifold $M$, and for any $(x,y)\in TM$, consider $\alpha(x,y)=\sqrt{h(y,y)}$. Suppose $\beta$ is a one-form on $M$, and $\phi:(-b_0,b_0)\longrightarrow\mathbb{R}^+$ is a $C^\infty$ function that satisfies the condition:
\begin{equation}\label{alpha-beta condition}
\phi(s)-s\phi'(s)+(b^2-s^2)\phi''(s)>0, \ \ \ \ |s|\leq b<b_0,
\end{equation}
where $\|\beta\|_\alpha<b_0$. In this case, we can define the function $F=\alpha\phi(\beta/\alpha)$, which represents a Finsler metric on $M$. It is worth noting that the one-form $\beta$ can be replaced by a vector field $X$ such that $\beta(x,y)=\langle X(x),y\rangle$.\\
For any Finsler manifold $(M,F)$, we can define the fundamental tensor $g$ and the Cartan tensor $C$ on the pull-back tangent bundle $\pi^\ast TM$ over $TM\setminus{0}$ as follows:
\begin{eqnarray*}
g_y(u,v) &=& g_{ij}(x,y)u^iv^j, \\
C_y(u,v,w) &=& C_{ijk}(x,y)u^iv^jw^k,
\end{eqnarray*}
where $g_{ij}(x,y)=(\frac{1}{2}F^2){y^iy^j}$ and $C_{ijk}(x,y)=(\frac{1}{4}F^2)_{y^iy^jy^k}$.\\
Let $(g^{ij})$ be the inverse matrix of $(g_{ij})$. For any $y=y^i\frac{\partial}{\partial x^i}\in T_xM\setminus{0}$, we define the following quantities:
\begin{eqnarray*}
\gamma^i_{jk} &=& \frac{1}{2}g^{is}\big{(}\frac{\partial g_{sj}}{\partial x^k}-\frac{\partial g_{jk}}{\partial x^s}+\frac{\partial g_{ks}}{\partial x^j}\big{)}, \\
N^i_j &=& \gamma^i_{jk}y^k-C^i_{jk}\gamma^k_{rs}y^ry^s,
\end{eqnarray*}
where $C^i_{jk}=g^{is}C_{sjk}$.\\
The Chern connection is a unique linear connection on the pull-back tangent bundle $\pi^\ast TM$ such that its coefficients on the standard coordinate system are defined by:
\begin{equation*}
    \Gamma^i_{jk}=\gamma^i_{jk}-g^{il}\big{(}C_{ljs}N^s_k-C_{jks}N^s_l+C_{kls}N^s_j\big{)}.
\end{equation*}
Suppose that $V$ and $W$ are two vector fields defined along a smooth curve $\sigma:[0,r]\longrightarrow M$. If $T=T(t)=\dot{\sigma}(t)$ denotes the velocity field of the curve $\sigma$, then we define $D_TV$ with reference vector $W$ by the following equation:
\begin{equation*}
D_TV=\big{(}\frac{dV^i}{dt}+V^jT^k(\Gamma^i_{jk}){(\sigma,W)}\big{)}\frac{\partial}{\partial x^i}|{\sigma(t)}.
\end{equation*}
A curve $\sigma$ is called a geodesic (Finslerian geodesic) if $D_T\big{(}\frac{T}{F(T)}\big{)}=0$.
We recall that a Finsler metric $F$ is called of Berwald type (or Berwaldian) if its Chern connection coefficients $\Gamma^i_{jk}$, in the standard coordinate system, are functions of $x$ only. It is also called of Douglas type if it is projectively equivalent to a Riemannian metric on $M$.


\section{\textbf{Naturally reductive Finsler spaces and geodesic vectors}}
In this brief section, we study the concepts of geodesic vectors and naturally reductive spaces within the context of homogeneous $(\alpha,\beta)$ spaces. More precisely, we consider a homogeneous Finsler space $(M=G/H, F)$ where the Finsler metric $F$ is defined by an invariant Riemannian metric $h$ and a vector field $X$. In a previous work by the second author and Parhizkar \cite{Salimi-Parhizkar}, they established that, under certain conditions, any geodesic vector of $(M, F)$ is also a geodesic vector of $(M,h)$ and vice versa. In the following proposition, we demonstrate that one of these conditions is not essential.
\begin{prop} \label{salimi-parhizkar proposition}
Let $(M=G/H, F)$ be a homogeneous Finsler space with a reductive decomposition $\mathfrak{g}=\mathfrak{h} \oplus \mathfrak{m}$, where $F$ is an invariant $(\alpha,\beta)$-metric characterized by an invariant Riemannian metric $h$ and an invariant vector field $X$. Assume that $0\neq y \in \mathfrak{m}$ and that $X$ is orthogonal to $[y,\mathfrak{m}]_{\mathfrak{m}}$. Then, $y$ is a geodesic vector of $(M,F)$ if and only if it is a geodesic vector of $(M,h)$.
\end{prop}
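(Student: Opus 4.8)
The plan is to reduce the statement to a comparison of two purely algebraic conditions on $\mathfrak{m}$. Recall the Finslerian analogue of the Kowalski--Vanhecke characterization (Latifi's criterion): in a homogeneous Finsler space $(G/H,F)$ with reductive decomposition $\mathfrak{g}=\mathfrak{h}\oplus\mathfrak{m}$, a vector $0\neq y\in\mathfrak{m}$ is a geodesic vector of $(M,F)$ if and only if
\begin{equation*}
g_y(y,[y,u]_{\mathfrak{m}})=0 \qquad \text{for all } u\in\mathfrak{m},
\end{equation*}
where $g_y$ denotes the fundamental tensor of $F$ at $y$. On the other hand, by the Kowalski--Vanhecke criterion quoted in the Introduction, $y$ is a geodesic vector of $(M,h)$ if and only if $h(y,[y,u]_{\mathfrak{m}})=0$ for all $u\in\mathfrak{m}$. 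Hence it suffices to prove that, under the hypothesis, $g_y(y,w)=0$ if and only if $h(y,w)=0$ whenever $w$ is of the form $w=[y,u]_{\mathfrak{m}}$.

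The key input is the standard formula for the fundamental tensor of an $(\alpha,\beta)$-metric contracted in the velocity direction. Put $s=\beta(y)/\alpha(y)$ and $F=\alpha\phi(s)$. Using $g_y(y,w)=F(y)\,F_{y^i}(y)\,w^i$ together with the identity $F_{y^i}=(\phi(s)-s\phi'(s))\,\alpha_{y^i}+\phi'(s)\,b_i$, and the fact that $\beta$ is induced by the vector field $X$ (so that $b_iw^i=\langle X,w\rangle$ and $\alpha_{y^i}w^i=h(y,w)/\alpha(y)$), one obtains
\begin{equation*}
g_y(y,w)=\phi(s)\bigl(\phi(s)-s\phi'(s)\bigr)\,h(y,w)+\alpha(y)\,\phi(s)\phi'(s)\,\langle X,w\rangle .
\end{equation*}
Write $\lambda=\phi(s)(\phi(s)-s\phi'(s))$. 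Since $\phi>0$ by definition and, because $|s|\le\|X\|_\alpha<b_0$, we may take $b=|s|$ in the structural inequality (\ref{alpha-beta condition}) to get $\phi(s)-s\phi'(s)>0$, we conclude $\lambda>0$.

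Now substitute $w=[y,u]_{\mathfrak{m}}$ for an arbitrary $u\in\mathfrak{m}$. By assumption $X$ is orthogonal to $[y,\mathfrak{m}]_{\mathfrak{m}}$, so $\langle X,[y,u]_{\mathfrak{m}}\rangle=0$ and the displayed formula collapses to $g_y(y,[y,u]_{\mathfrak{m}})=\lambda\,h(y,[y,u]_{\mathfrak{m}})$. Because $\lambda>0$, the left-hand side vanishes for every $u\in\mathfrak{m}$ precisely when $h(y,[y,u]_{\mathfrak{m}})=0$ for every $u\in\mathfrak{m}$; by the two criteria recalled above this is exactly the statement that $y$ is a geodesic vector of $(M,F)$ if and only if it is a geodesic vector of $(M,h)$. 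The only genuinely delicate points are recording the contracted fundamental-tensor formula correctly and checking $\lambda\neq 0$; the orthogonality hypothesis then does the rest, and — in contrast with the cited result of the second author and Parhizkar — no hypothesis is needed on the behaviour of $X$ (equivalently $\beta$) outside the subspace $[y,\mathfrak{m}]_{\mathfrak{m}}$.
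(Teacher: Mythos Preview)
Your proof is correct and follows essentially the same route as the paper: derive the identity $g_y(y,[y,u]_{\mathfrak{m}})=\phi(s)\bigl(\phi(s)-s\phi'(s)\bigr)\,h(y,[y,u]_{\mathfrak{m}})$ under the orthogonality hypothesis, and then observe that the scalar factor is strictly positive by taking $b=|s|$ in~\eqref{alpha-beta condition}. The only difference is cosmetic---the paper quotes the contracted fundamental-tensor identity from \cite{Salimi-Parhizkar} rather than rederiving it as you do.
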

\begin{proof}
In the proof of Theorem 2.3 presented in \cite{Salimi-Parhizkar}, it is demonstrated that
\begin{equation}\label{g_y_m}
g_{y_{\mathfrak{m}}}(y_{\mathfrak{m}},[y,z]{\mathfrak{m}})=h(y{\mathfrak{m}},[y,z]{\mathfrak{m}})\left( \phi ^2(r{\mathfrak{m}}) -\phi(r_{\mathfrak{m}})\phi '(r_{\mathfrak{m}})r_{\mathfrak{m}}\right),
\end{equation}
where $r_{\mathfrak{m}}=\frac{h(X,y_{\mathfrak{m}})}{\sqrt{h(y_{\mathfrak{m}},y_{\mathfrak{m}})}}$.
Let us introduce the function $f(s)=\phi(s)-s\phi'(s)$, where $\phi$ is the function used in the definition of the $(\alpha,\beta)$-metric $F$. Elementary calculus, leveraging the equation \eqref{alpha-beta condition}, confirms that $f$ is a positive function. Now, the fact that the function $\phi$ is positive implies that $\phi ^2(r_{\mathfrak{m}}) -\phi(r_{\mathfrak{m}})\phi '(r_{\mathfrak{m}})r_{\mathfrak{m}}>0$, which concludes the proof.
\end{proof}
The proposition above demonstrates that Theorem 2.3 from \cite{Salimi-Parhizkar} remains valid even without the condition $\phi''(r_{\mathfrak{m}})\leq 0$.
A similar result can be established for invariant $(\alpha,\beta)$-metrics of Douglas type as follows.
\begin{prop} \label{Douglas type}
Let $(M=G/H, F)$ be a homogeneous Finsler space, where $F$ is an invariant $(\alpha,\beta)$-metric of Douglas type. Then, there exists an invariant Riemannian metric $h$ on $M$ such that $(M, F)$ and $(M,h)$ share the same geodesic vectors.
\end{prop}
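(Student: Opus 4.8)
The plan is to show that on a homogeneous space being of Douglas type is so restrictive for an $(\alpha,\beta)$-metric that $F$ must in fact be Berwaldian, or degenerately Riemannian, after which the invariant Riemannian metric underlying $\alpha$ already does the job.

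First I would fix notation. Since $F$ is $G$-invariant and the splitting of an $(\alpha,\beta)$-metric into its Riemannian part and its one-form part is canonical, $\alpha$ is induced by a $G$-invariant Riemannian metric $h$ and $\beta(x,y)=h(X(x),y)$ for a $G$-invariant vector field $X$ on $M=G/H$, exactly as in the setting of Proposition \ref{salimi-parhizkar proposition}. The decisive consequence of homogeneity is that $b^{2}:=h(X,X)$ is a $G$-invariant function, hence constant on $M$; thus $(b^{2})_{|j}=0$, where $|$ denotes covariant differentiation with respect to the Levi--Civita connection of $\alpha$, and in particular $b^{i}b_{i|j}=\tfrac12(b^{2})_{|j}=0$.

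Next I would invoke the classification of $(\alpha,\beta)$-metrics of Douglas type (B\'acs\'o--Matsumoto and its refinements). That classification forces $\beta$ to be closed, so $b_{i|j}$ is symmetric in $i,j$, and in the non-Berwald case it has the form
\[
b_{i|j}=2\tau\bigl\{(1+\kappa b^{2})\,a_{ij}-\kappa\, b_{i}b_{j}\bigr\}
\]
for a function $\tau$ on $M$ and a constant $\kappa$ (together with an ordinary differential equation for $\phi$). Contracting this identity with $b^{i}$ gives $b^{i}b_{i|j}=2\tau b_{j}$, so by the preceding paragraph $\tau b_{j}\equiv 0$. Hence either $\beta\equiv 0$, in which case $F=\phi(0)\,\alpha$ is a constant rescaling of the Riemannian metric $\alpha$, or $\tau\equiv 0$, in which case $b_{i|j}=0$, i.e.\ $\beta$ is parallel with respect to the Levi--Civita connection of $\alpha$ and $F$ is a Berwald metric. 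I expect this to be the delicate point of the argument: one must have the precise form of the Douglas classification in hand and verify that, under the constraint $b^{2}=\mathrm{const}$, no branch survives except the parallel (Berwald) and the trivial (Riemannian) ones, the regularity inequality \eqref{alpha-beta condition} on $\phi$ being what excludes the degenerate cases.

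Finally, in each of the two surviving cases the geodesic equation of $F$ coincides with that of the Riemannian metric $h$: for a Berwald metric with $\beta$ parallel with respect to $\alpha$ this is the classical fact that the Berwald connection of $F$ equals the Levi--Civita connection of $\alpha$, and for $F=\phi(0)\alpha$ it is immediate. Since $F$ is $G$-invariant, the curve $t\mapsto\pi(\exp(tx))$ has constant $F$-speed, so it is an $F$-geodesic exactly when it satisfies this common geodesic equation, i.e.\ exactly when it is a geodesic of $h$. Therefore $(M,F)$ and $(M,h)$ have the same geodesic vectors, which is the assertion.
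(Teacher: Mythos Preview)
Your argument has a genuine gap in the Randers branch. The structure formula
\[
b_{i|j}=2\tau\bigl\{(1+\kappa b^{2})\,a_{ij}-\kappa\, b_{i}b_{j}\bigr\}
\]
from the Douglas classification applies to the \emph{non-Randers} case (where it is coupled with an ODE on $\phi$); for a Randers metric the Douglas condition only says that $\beta$ is closed, i.e.\ $b_{i|j}=b_{j|i}$, and nothing more. Your contraction then yields only $b^{i}b_{i|j}=0$, which does not force $b_{i|j}=0$. Concretely, on the affine group $\mathrm{Aff}(1)$ with left-invariant orthonormal frame $e_{1},e_{2}$, $[e_{1},e_{2}]=e_{2}$, the Randers metric built from $X=e_{1}$ has $d\beta=0$ (so is Douglas) but $\nabla_{e_{2}}e_{1}=-e_{2}\neq 0$, hence is not Berwald. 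So the dichotomy ``Berwald or trivially Riemannian'' fails, and with it your conclusion that $F$ and $h$ share the same geodesic equation.

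The paper handles exactly this surviving Randers case differently. It first quotes the Liu--Deng dichotomy (homogeneous Douglas $(\alpha,\beta)$-metrics are Berwald or Randers), disposes of the Berwald branch as you do, and then for Douglas Randers observes that closedness of the invariant one-form $\beta$ is equivalent, at the origin, to $X\perp[\mathfrak{m},\mathfrak{m}]_{\mathfrak{m}}$. This is precisely the orthogonality hypothesis of Proposition~\ref{salimi-parhizkar proposition}, which then gives that $y$ is a geodesic vector for $F$ if and only if it is one for $h$. Note that in this branch $(M,F)$ and $(M,h)$ do \emph{not} have the same geodesics (only projectively related ones), so one really needs the geodesic-vector criterion of Proposition~\ref{salimi-parhizkar proposition} rather than an identification of connections. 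Your non-Randers reduction to Berwald via the constancy of $b^{2}$ is essentially the content of the Liu--Deng result in that branch, so once you add the Randers argument above your proof aligns with the paper's.
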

\begin{proof}
 According to Theorem 1.1 in \cite{Liu-Deng}, any $(\alpha,\beta)$-metric of Douglas type is either of Berwald type or Randers type. If $F$ is of Berwald type, then the Finsler metric $F$ and the Riemannian metric corresponding to $\alpha$ have identical geodesics. On the other hand, if $F$ is a Douglas metric of Randers type, then there exists an invariant Riemannian metric $h$ and an invariant vector field $X$ orthogonal to $[\mathfrak{m},\mathfrak{m}]_{\mathfrak{m}}$ such that $F(x,y)=\sqrt{h(y,y)}+h(X,y)$. Consequently, the previous proposition completes the proof.
\end{proof}
\begin{remark}
Hence, in both cases presented in the above propositions, all the results of geodesic vectors in the Riemannian case extend automatically to the Finsler case.
\end{remark}
Now, let us proceed to the study of naturally reductive Finsler spaces. We begin by defining naturally reductive Riemannian spaces.
\begin{definition}
A homogeneous Riemannian manifold $(M=G/H,h)$ is called naturally reductive if there exists an $Ad(H)$-invariant decomposition $\mathfrak{g}=\mathfrak{h}\oplus\mathfrak{m}$ such that
$$\langle[x,y]_{\mathfrak{m}},z\rangle+\langle y,[x,z]_{\mathfrak{m}}\rangle=0\qquad \forall , x,y,z\in \mathfrak{m},$$
where $\langle,\rangle$ denotes the inner product on $\mathfrak{m}$ induced by $h$, and $[,]_{\mathfrak{m}}$ represents the projection to $\mathfrak{m}$ with respect to the aforementioned decomposition.
\end{definition}
If all geodesics of a homogeneous Riemannian manifold, under the influence of the largest connected group of isometries, are homogeneous geodesics, then the homogeneous Riemannian manifold is referred to as a geodesic orbit space (g.o. space). It has been shown that any naturally reductive homogeneous Riemannian space is a g.o. space (see \cite{Kobayashi-Nomizu}). However, there exist g.o. spaces that are not naturally reductive in any manner (see \cite{Kaplan}).\\
In \cite{Deng-Hou}, the aforementioned definition of naturally reductive homogeneous Riemannian spaces is extended to Finsler spaces as follows.
\begin{definition} \label{definition Deng and Hou}
Let $G/H$ be a homogeneous manifold equipped with an invariant Finsler metric $F$. It is called a naturally reductive Finsler space if there exists an invariant Riemannian metric $h$ on $G/H$ such that $(G/H,h)$ is naturally reductive, and the Levi-Civita connection of $h$ coincides with the Chern connection of $F$.
\end{definition}
Based on this definition, a naturally reductive Finsler space must be Berwaldian.
\begin{remark} \label{Naturally reductive (alpha ,beta )-space}
We can see that if $(M=G/H, F)$ is a naturally reductive $(\alpha,\beta)$ space defined by a Riemannian metric $h$ and a vector field $X$, then the homogeneous Riemannian space $(M,h)$ is naturally reductive. If $(M=G/H, F)$ is naturally reductive then, there exists a Riemannian metric $\tilde{h}$ such that $(M,\tilde{h})$ is a naturally reductive homogeneous Riemannian space, and the Levi-Civita connection of $\tilde{h}$ and the Chern connection of $F$ coincide. On the other hand, $(M, F)$ is of Berwald type so the Levi-Civita connection of $h$ and the Chern connection of $F$ also coincide. In fact, $(M,\tilde{h})$ and $(M,h)$ have the same Levi-Civita connection. It is well known that a Levi-Civita connection determines the Riemannian metric up to a constant conformal factor (see \cite{Schmidt}). Therefore, there exists a positive real number $\mu$ such that $h=\mu \tilde{h}$ and so $(M,h)$ is also naturally reductive.
\end{remark}

\section{\textbf{Two-step homogeneous  g.o. Finsler spaces}}
As previously mentioned, the concept of a two-step homogeneous geodesic was introduced by Arvanitoyeorgos and Panagiotis Souris in \cite{Arvanitoyeorgos-Panagiotis Souris}. In this section, we aim to explore this notion in the context of homogeneous Finsler spaces. Consider a homogeneous Finsler manifold $(G/H,F)$, where $\pi:G\longrightarrow G/H$ denotes the projection map, and let $o=\pi(e)$ represent the origin of $G/H$.
\begin{definition}
A geodesic $\gamma $ on $G/H$ is said to be two-step homogeneous if there exist non-zero vectors $x,y\in\mathfrak{g}$ such that
 $$\gamma (t)= \pi (\exp tx \exp ty) \qquad \forall \, t\in \mathbb{R}.$$
\end{definition}
\begin{definition}
A homogeneous Finsler space $(G/H,F)$, is called a two-step homogeneous g.o. space (two-step homogeneous geodesic orbit space) if all geodesics $\gamma$ with $\gamma(0) = o$ are two-step homogeneous.
\end{definition}
In the following, we investigate the existence of two-step homogeneous g.o. Finsler spaces and provide some examples of such spaces.
\begin{theorem}\label{main theorem}
Assume that $(M=G/H,F)$ is a naturally reductive Finsler space, where $F$ represents an invariant $(\alpha,\beta)$-metric derived from an invariant Riemannian metric $h$ and an invariant vector field $X$. Let $\langle,\rangle$ denote the corresponding inner product on $\mathfrak{m}=T_o(G/H)$ defined by $h$. If $\mathfrak{m}=\mathfrak{m}_1\oplus \mathfrak{m}_2$ represents an $Ad(H)$-invariant orthogonal decomposition of $\mathfrak{m}$ and $[\mathfrak{m}_1,\mathfrak{m}_2]\subseteq \mathfrak{m}_1$, with $X$ belonging to $\mathfrak{m}_2$, then $M$ possesses a one-parameter family of invariant Finsler metrics $F_{\lambda}$, where $\lambda \in \mathbb{R}^+$, such that $(M,F_{\lambda})$ is a two-step Finsler g.o. space. Each metric $F_{\lambda}$ takes the following form:
\begin{itemize}
  \item For $0<\lambda <1$, $F_{\lambda }=\alpha_{\lambda }\phi (\frac{\beta}{\alpha_{\lambda}})$.
  \item For $\lambda >1$, $F_{\lambda }=\alpha_{\lambda }\phi (\frac{\beta_{\lambda}}{\alpha_{\lambda}})$.
\end{itemize}
Here $\alpha _{\lambda }$ is the norm of the Riemannian metric on $M$ which corresponds to the inner product $\langle ,\rangle _{\lambda} =\langle ,\rangle \vert _{\mathfrak{m}_1}\oplus \lambda \langle , \rangle\vert _{\mathfrak{m}_2}$ and $\beta _{\lambda}$ is the one-form that corresponds to the vector field $X_{\lambda}=\dfrac{1}{\sqrt{\lambda}} X$.
\end{theorem}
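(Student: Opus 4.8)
The plan is to reduce everything to the Riemannian result of Arvanitoyeorgos and Panagiotis Souris and the comparison of geodesic vectors coming from Proposition \ref{salimi-parhizkar proposition}. First I would fix $\lambda \in \mathbb{R}^+$ and build the Riemannian metric $h_\lambda$ associated to $\langle,\rangle_\lambda = \langle,\rangle|_{\mathfrak{m}_1} \oplus \lambda\langle,\rangle|_{\mathfrak{m}_2}$. Since $\mathfrak{m} = \mathfrak{m}_1 \oplus \mathfrak{m}_2$ is $\mathrm{Ad}(H)$-invariant and orthogonal, $h_\lambda$ is again an invariant Riemannian metric on $M$. The key structural input is the bracket relation $[\mathfrak{m}_1,\mathfrak{m}_2]\subseteq \mathfrak{m}_1$: combined with naturally reductivity of $(M,h)$ (which we get from Remark \ref{Naturally reductive (alpha ,beta )-space}), this is exactly the hypothesis under which Arvanitoyeorgos--Souris show $(M,h_\lambda)$ is a two-step homogeneous g.o. space. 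So I would invoke their theorem to conclude that every geodesic of $(M,h_\lambda)$ through $o$ is two-step homogeneous, i.e. of the form $\pi(\exp tx \exp ty)$.

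Next I would pass from $h_\lambda$ to the Finsler metric $F_\lambda$. The definition of $F_\lambda$ is arranged so that, in both cases $0<\lambda<1$ and $\lambda>1$, $F_\lambda$ is the $(\alpha,\beta)$-metric built from $h_\lambda$ and the vector field $X_\lambda = \tfrac{1}{\sqrt\lambda}X$ — for $0<\lambda<1$ the one-form is $\beta$ itself because $\|X\|_{\alpha_\lambda}$ and $h_\lambda(X,\cdot)$ coincide with $\|X\|_\alpha$ and $\beta$ up to the relevant rescaling on $\mathfrak{m}_2$, while for $\lambda>1$ one must rescale $X$ to $X_\lambda$ to keep the admissibility condition \eqref{alpha-beta condition} (the constant $b_0$) satisfied; I would check this normalization carefully since it is the reason the statement splits into two cases. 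In either case $X_\lambda \in \mathfrak{m}_2$ and $F_\lambda$ is a genuine invariant $(\alpha,\beta)$-Finsler metric on $M$.

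Now the heart of the argument is that $(M,F_\lambda)$ and $(M,h_\lambda)$ have the same geodesics through $o$, so the two-step property transfers. Here I would apply Proposition \ref{salimi-parhizkar proposition}: its hypothesis is that $X$ is orthogonal to $[y,\mathfrak{m}]_\mathfrak{m}$ for the relevant $y$. Since $X_\lambda \in \mathfrak{m}_2$ and, for any $y = y_1 + y_2 \in \mathfrak{m}$, the bracket $[y,\mathfrak{m}]_\mathfrak{m}$ lands in $\mathfrak{m}_1 + [\mathfrak{m}_2,\mathfrak{m}_2]_\mathfrak{m}$ — and here one needs $[\mathfrak{m}_2,\mathfrak{m}_2]_\mathfrak{m}$ to stay orthogonal to $\mathfrak{m}_2$, which follows from naturally reductivity applied within $\mathfrak{m}_2$ together with $[\mathfrak{m}_1,\mathfrak{m}_2]\subseteq\mathfrak{m}_1$ — one gets $X_\lambda \perp_{h_\lambda} [y,\mathfrak{m}]_\mathfrak{m}$ for every $y$. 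Thus every $h_\lambda$-geodesic vector is an $F_\lambda$-geodesic vector and conversely, so the set of homogeneous geodesics agrees; more to the point, by naturally reductivity $F_\lambda$ is Berwaldian and shares the Levi-Civita connection of (a constant multiple of) $h_\lambda$, so the full geodesic spray agrees and every geodesic of $(M,F_\lambda)$ through $o$ coincides with a geodesic of $(M,h_\lambda)$, hence is two-step homogeneous.

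The main obstacle I anticipate is the bookkeeping in the second and third paragraphs: verifying that the rescaled data $(h_\lambda, X_\lambda)$ really satisfies the admissibility inequality \eqref{alpha-beta condition} with the same $\phi$ (which is what forces the two-case split, since rescaling $X$ changes $\|X\|_{\alpha_\lambda}$ and one must keep it below $b_0$), and confirming that naturally reductivity of $(M,h)$ descends to naturally reductivity of $(M,h_\lambda)$ under the decomposition with $[\mathfrak{m}_1,\mathfrak{m}_2]\subseteq\mathfrak{m}_1$ — this is where the orthogonality hypothesis of Proposition \ref{salimi-parhizkar proposition} is secretly used. Everything else is a direct concatenation of the cited results.
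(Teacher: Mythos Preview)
Your overall strategy --- reduce to Arvanitoyeorgos--Souris for $(M,h_\lambda)$, then show $(M,F_\lambda)$ and $(M,h_\lambda)$ have the same geodesics --- matches the paper. The gap is in the second step. Proposition~\ref{salimi-parhizkar proposition} only tells you that \emph{geodesic vectors} coincide, i.e.\ that the homogeneous geodesics of $F_\lambda$ and $h_\lambda$ agree; but the two-step g.o.\ property concerns \emph{all} geodesics through $o$, most of which are not homogeneous. You recognize this and try to patch it by asserting that $F_\lambda$ is Berwaldian ``by naturally reductivity'', but no justification is supplied: the hypothesis gives naturally reductivity of $F$ (equivalently of $h$), not of $F_\lambda$. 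Your anticipated route --- showing that natural reductivity of $(M,h)$ descends to $(M,h_\lambda)$ --- is in fact false for generic $\lambda$; the whole point of the Arvanitoyeorgos--Souris construction is that the metrics $h_\lambda$ are typically \emph{not} naturally reductive yet remain two-step g.o. Separately, your orthogonality argument for Proposition~\ref{salimi-parhizkar proposition} claims that $[\mathfrak{m}_2,\mathfrak{m}_2]_\mathfrak{m}\perp\mathfrak{m}_2$ follows from natural reductivity, which it does not.

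The paper closes this gap differently: rather than going through geodesic vectors or natural reductivity of the deformed data, it proves directly that $F_\lambda$ is Berwald by verifying the algebraic characterization of Bahmandoust--Latifi for invariant $(\alpha,\beta)$-metrics. Since $F$ is naturally reductive it is Berwald, so $\langle[y,z]_\mathfrak{m},X\rangle=0$ and $\langle[y,X]_\mathfrak{m},z\rangle+\langle[z,X]_\mathfrak{m},y\rangle=0$ hold for $\langle,\rangle$. One then checks by hand that these two identities persist for $\langle,\rangle_\lambda$, using only $X\in\mathfrak{m}_2$ and $[\mathfrak{m}_1,\mathfrak{m}_2]\subseteq\mathfrak{m}_1$ (the second identity reduces to $(\lambda-1)(\langle[y_1,X]_\mathfrak{m},z_2\rangle+\langle[z_1,X]_\mathfrak{m},y_2\rangle)$, which vanishes since $[\mathfrak{m}_1,X]\subseteq\mathfrak{m}_1\perp\mathfrak{m}_2$). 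Hence $F_\lambda$ is Berwald, its Chern connection is the Levi-Civita connection of $h_\lambda$, and the geodesics literally coincide. This is the missing computation you should insert in place of the appeal to Proposition~\ref{salimi-parhizkar proposition}.
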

\begin{proof}
Since the homogeneous Finsler space $(M,F)$ is naturally reductive, we can apply Remark \ref{Naturally reductive (alpha ,beta )-space} to conclude that the homogeneous Riemannian space $(M,h)$ is also naturally reductive.
By Corollary 2.4 in \cite{Arvanitoyeorgos-Panagiotis Souris}, it follows that $(M,h_{\lambda})$ is a two-step geodesic orbit (g.o.) space, where $h_{\lambda }$ is the Riemannian metric corresponding to the inner product $\langle,\rangle _{\lambda }$ on $T_o(G/H)$. Now, let us consider the Finsler metric $F_{\lambda}$ as mentioned above. We will prove that $F_{\lambda}$ is of Berwald type in each case. If we have $y=y_1+y_2$ and $z=z_1+z_2$, where $y_1,z_1\in \mathfrak{m}_1$ and $ y_2,z_2\in \mathfrak{m}_2$, then we can express the inner product $\langle y ,z \rangle _{\lambda}$ as follows:
$$\langle y ,z \rangle _{\lambda} =\langle y_1 ,z_1\rangle + \lambda \langle y_2, z_2\rangle =(1-\lambda ) \langle y_1 ,z_1\rangle + \lambda \langle y, z\rangle =\langle y ,z\rangle + (\lambda -1) \langle y_2, z_2\rangle.$$
On the other hand, we know that $F$ is an $(\alpha ,\beta)$-metric of Berwald type. Referring to Proposition 3.1 in \cite{Bahmandoust-Latifi}, we have the following relationships:
$$\langle [y,X]_{\mathfrak{m}},z\rangle +\langle [z,X]_{\mathfrak{m} },y\rangle=0, \qquad \langle [y,z]_{\mathfrak{m} },X\rangle=0 \quad \forall y,z\in \mathfrak{m}.$$
If $X\in \mathfrak{m}_2$, then we can evaluate $\langle [y ,z]_{\mathfrak{m} },X \rangle_{\lambda}$ as follows:
$$\langle [y ,z]_{\mathfrak{m} },X \rangle_{\lambda}=(1-\lambda )\langle [y ,z]_{\mathfrak{m}},0\rangle + \lambda \langle [y ,z]_{\mathfrak{m}}, X\rangle=0.$$
In addition, we can compute $\langle [y,X]_{\mathfrak{m} },z\rangle_{\lambda} +\langle [z,X]_{\mathfrak{m} },y\rangle_{\lambda}$ as follows:
\begin{align*}
\langle [y,X]_{\mathfrak{m} },z\rangle _{\lambda}  +\langle [z,X]_{\mathfrak{m} },y\rangle _{\lambda} &=\langle [y,X]_{\mathfrak{m} },z\rangle   +(\lambda -1)\langle [y,X]_{\mathfrak{m} },z_2\rangle \\
&+\langle [z,X]_{\mathfrak{m} },y\rangle   +(\lambda -1)\langle [z,X]_{\mathfrak{m} },y_2\rangle \\
&=(\lambda -1) (\langle [y_1,X]_{\mathfrak{m} },z_2\rangle +\langle [y_2,X]_{\mathfrak{m} },z_2\rangle \\
&+\langle [z_1,X]_{\mathfrak{m} },y_2\rangle +\langle [z_2,X]_{\mathfrak{m} },y_2\rangle )\\
&=(\lambda -1) (\langle [y_1,X]_{\mathfrak{m} },z_2\rangle + \langle [z_1,X]_{\mathfrak{m} },y_2\rangle).
\end{align*}
Since $[\mathfrak{m}_1,\mathfrak{m}_2]\subseteq \mathfrak{m}_1$, the aforementioned relation is equivalent to zero. Furthermore,
$$\langle X,X\rangle_{\lambda} =\lambda \langle X,X\rangle <\lambda b_0.$$
Hence, in each scenario, $F_{\lambda}$ is an $(\alpha ,\beta )$-metric of Berwald type, thereby implying that $(M,F_{\lambda})$ and $(M,h_{\lambda})$ possess identical geodesics.
\end{proof}
Next, we present a similar result for homogeneous decomposable cubic metric spaces. A metric $F$ is referred to as an m-th root Finsler metric if $F=\sqrt[m]{T}$
and $T=h_{i_1\cdots i_m}(x)y^{i_1}\cdots y^{i_m}$, where $h_{i_1\cdots i_m}$, with all its indices, is symmetric. The cubic metrics are specifically those metrics that correspond to the third root.
If $T$ decomposes as $T=h.b$, where $h=h_{ij}y^iy^j$ is a Riemannian metric and $b=b_i(x)y^i$ is a one-form such that $\Vert b\Vert ^2=h^{ij}b_ib_j=1$, then the cubic metric $F=\sqrt[3]{T}$ is called decomposable.
\begin{lem} \label{cubic metric}
Consider an invariant decomposable cubic space $(M=G/H,F=\sqrt[3]{T})$ where $T=h.b$, with $h=h_{ij}y^iy^j$ being an invariant Riemannian metric and $b=b_i(x)y^i$ being an invariant one-form. Let $X$ denote the invariant vector field associated with $b$. The metric $F$ is of Berwald type if and only if
$$\langle [y,X]_{\mathfrak{m} },z\rangle +\langle [z,X]_{\mathfrak{m} },y\rangle=0, \qquad \langle [y,z]_{\mathfrak{m} },X\rangle=0 \quad \forall y,z\in \mathfrak{m}.$$
\end{lem}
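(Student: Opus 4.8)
The plan is to recognize $F$ as an $(\alpha,\beta)$-metric of a very particular shape, reduce its Berwald property to the parallelism of $X$ with respect to the Levi-Civita connection of $h$, and then convert that parallelism into the two displayed bracket identities by means of the Nomizu operator of the reductive homogeneous space $(M,h)$. First I would set $\alpha=\sqrt{h}=\sqrt{h_{ij}y^iy^j}$ and $\beta=b=b_iy^i$, so that $F=\sqrt[3]{h\cdot b}=\sqrt[3]{\alpha^{2}\beta}=\alpha\,\phi(\beta/\alpha)$ with the cubic profile $\phi(s)=s^{1/3}$. Thus $F$ is an $(\alpha,\beta)$-metric; it is not regular in the sense of \eqref{alpha-beta condition} on all of $TM$, but it is a genuine Finsler metric on the open cone $\{\beta>0\}$, which is where all the connection computations take place. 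The normalization $\Vert b\Vert_{\alpha}^{2}=1$ is automatic here ($h$ and $b$ are $G$-invariant, so $\Vert b\Vert_{\alpha}$ is constant), and since the identities to be proved are linear in $X$ it plays only an auxiliary role.

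The Finsler ingredient is the statement that a cubic metric $F=\sqrt[3]{\alpha^{2}\beta}$ is of Berwald type if and only if $\beta$ is parallel with respect to the Levi-Civita connection of $\alpha$, equivalently $X$ is parallel with respect to $h$. The ``if'' direction is immediate: if $b_{i|j}=0$ then every term in the spray coefficients of an $(\alpha,\beta)$-metric involving a covariant derivative of $\beta$ drops out, so $G^{i}_{F}=G^{i}_{\alpha}$ are quadratic in $y$ and $F$ is Berwald with Chern connection equal to the Levi-Civita connection of $\alpha$. For the ``only if'' direction one expands the $(\alpha,\beta)$-spray for $\phi(s)=s^{1/3}$ (here $\phi-s\phi'=\tfrac{2}{3}s^{1/3}$, so $Q:=\phi'/(\phi-s\phi')=1/(2s)$, and so on); since $\phi$ is not of Randers type, the identity $G^{i}_{F}=G^{i}_{\alpha}$ forced by Berwaldness can hold only if the $\beta$-derivative terms vanish, i.e. $b_{i|j}=0$. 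This is the cubic instance of the classical Berwald criterion for $(\alpha,\beta)$-metrics and, in the present homogeneous setting, the analogue of Proposition~3.1 of \cite{Bahmandoust-Latifi}; I would establish it either by checking that the argument of that proposition goes through for $\phi(s)=s^{1/3}$ despite the failure of \eqref{alpha-beta condition}, or by the parallel-$\beta$ route just sketched.

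It then remains to do the algebraic translation. Since $h$ and $X$ are $G$-invariant, $X$ is parallel if and only if $(\nabla_{y^{\ast}}X^{\ast})_{o}=0$ for every $y\in\mathfrak{m}$, where $y^{\ast}$ is the invariant vector field with $y^{\ast}_{o}=y$. For $(M=G/H,h)$ the Nomizu operator gives, for all $y,z\in\mathfrak{m}$,
\[
2\langle(\nabla_{y^{\ast}}X^{\ast})_{o},z\rangle=\langle[y,X]_{\mathfrak{m}},z\rangle+\langle[z,X]_{\mathfrak{m}},y\rangle-\langle[y,z]_{\mathfrak{m}},X\rangle,
\]
so $X$ is parallel exactly when this right-hand side vanishes for all $y,z$. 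If the two identities of the lemma hold, the right-hand side is visibly zero. Conversely, if it vanishes for all $y,z$, then interchanging $y$ and $z$ and subtracting the two resulting equations gives $\langle[y,z]_{\mathfrak{m}},X\rangle=0$, and feeding this back leaves precisely $\langle[y,X]_{\mathfrak{m}},z\rangle+\langle[z,X]_{\mathfrak{m}},y\rangle=0$. Combined with the Finsler ingredient, $F$ is Berwald if and only if both identities hold.

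The main obstacle is the ``only if'' half of the Finsler ingredient, namely that a Berwald cubic metric must have parallel $\beta$: this requires the explicit spray computation for $\phi(s)=s^{1/3}$ and some care, since cubic metrics violate \eqref{alpha-beta condition}, so one cannot quote the paper's own $(\alpha,\beta)$-results verbatim but must work on the regular cone $\{\beta>0\}$ (or invoke the corresponding statements for $m$-th root metrics). The ``if'' direction and the Nomizu bookkeeping are routine.
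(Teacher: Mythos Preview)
Your proposal is correct and follows exactly the paper's two-step route: first reduce the Berwald condition for the cubic metric to parallelism of $b$ (the paper simply cites Theorem~9 of \cite{Brinzei} for this, which is precisely the ``$m$-th root'' statement you allude to), and then translate parallelism of $X$ into the two bracket identities via the Nomizu/Koszul formula on the reductive space (the paper cites the argument of Theorem~3.1 in \cite{An-Deng Monatsh}, which is the computation you wrote out). The only cosmetic slip is calling $y^{\ast}$ ``the invariant vector field with $y^{\ast}_{o}=y$''; what you actually use is the fundamental (Killing) vector field generated by $y\in\mathfrak{g}$, but the displayed formula and the symmetry argument extracting the two identities are correct.
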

\begin{proof}
The validity of $F$ being of Berwald type is established by Theorem 9 in \cite{Brinzei}, which states that $F$ is of Berwald type if and only if $b$ is parallel with respect to $h$. To prove this, we can employ the same argument as in the proof of Theorem 3.1 in \cite{An-Deng Monatsh}.
\end{proof}
\begin{lem} \label{tow connection with the same geodesics}
Assume that $(M,h)$ and $(M,\tilde{h})$ are two Riemannian manifolds sharing the same geodesics. Then, the Levi-Civita connections of $(M,h)$ and $(M,\tilde{h})$ coincide.
\end{lem}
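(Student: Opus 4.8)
The plan is to invoke the classical principle that a torsion-free affine connection is completely determined by its geodesics, considered as \emph{parametrized} curves. Let $\nabla$ and $\tilde{\nabla}$ denote the Levi-Civita connections of $h$ and $\tilde{h}$; both are torsion-free. Their difference $D(U,V):=\tilde{\nabla}_UV-\nabla_UV$ is $C^{\infty}(M)$-bilinear in $U$ and $V$, hence a tensor field of type $(1,2)$, and from the vanishing of the torsion of $\nabla$ and $\tilde{\nabla}$ one gets $D(U,V)=D(V,U)$, i.e. $D$ is symmetric in its two arguments. The goal is to show $D\equiv 0$.

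First I would fix a point $p\in M$ and a tangent vector $v\in T_pM$, and let $\gamma$ be the $h$-geodesic with $\gamma(0)=p$ and $\dot{\gamma}(0)=v$. By hypothesis $\gamma$ is also a $\tilde{h}$-geodesic (with the same parameter), so $\nabla_{\dot{\gamma}}\dot{\gamma}=0$ and $\tilde{\nabla}_{\dot{\gamma}}\dot{\gamma}=0$ along $\gamma$; subtracting and evaluating at $t=0$ yields $D_p(v,v)=0$. Since $p$ and $v$ are arbitrary, $D(v,v)=0$ for every tangent vector. Then I would polarize, using the symmetry of $D$: for any $v,w\in T_pM$,
$$0=D_p(v+w,v+w)=D_p(v,v)+D_p(v,w)+D_p(w,v)+D_p(w,w)=2\,D_p(v,w),$$
so $D_p(v,w)=0$. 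Hence $D\equiv 0$, that is, $\nabla=\tilde{\nabla}$.

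The only genuine subtlety — and the point I would be careful to phrase correctly — is the reading of ``the same geodesics'': the argument requires that an $h$-geodesic be a $\tilde{h}$-geodesic \emph{as a parametrized curve} (so that $\tilde{\nabla}_{\dot{\gamma}}\dot{\gamma}=0$ holds for the very same velocity field $\dot{\gamma}$), not merely as an unparametrized curve; under the weaker reading the statement fails, as projectively equivalent metrics need not share a connection. In the situation where the lemma is applied the geodesics carry their natural affine parametrization (as with $\gamma(t)=\pi(\exp tx\,\exp ty)$), so this is exactly the hypothesis at hand and no extra work is needed. One also uses tacitly that through every point and in every direction there is a common geodesic, which is automatic from the existence of geodesics for arbitrary initial data on a Riemannian manifold.
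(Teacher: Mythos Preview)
Your argument is correct and is precisely the classical principle the paper relies on: the paper's own proof merely notes that both Levi-Civita connections are symmetric (torsion-free) and then cites Proposition~4.10 of \cite{Schmidt} for the conclusion, whereas you supply that argument in full via the symmetric difference tensor and polarization. Your remark that the hypothesis must be read as agreement of \emph{parametrized} geodesics is a genuine subtlety the paper leaves implicit.
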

\begin{proof}
Let $\nabla $ and $\tilde{\nabla}$ denote the Levi-Civita connections of $(M,h)$ and $(M,\tilde{h})$, respectively. Both $\nabla $ and $\tilde{\nabla}$ are symmetric. Consequently, Proposition 4.10 in \cite{Schmidt} demonstrates that $\nabla =\tilde{\nabla}$.
\end{proof}
\begin{theorem} \label{two-step cubic space}
Consider a naturally reductive decomposable cubic space $(M=G/H,F=\sqrt [3]{T})$ where $T=h.b$, with $h=h_{ij}y^iy^j$ being an invariant Riemannian metric and $b=b_i(x)y^i$ being an invariant one-form satisfying $\Vert b\Vert ^2=h^{ij}b_ib_j=1$. Let $X$ denote the vector field corresponding to $b$, and let $\langle,\rangle $ represent the corresponding inner product on $\mathfrak{m}=T_o(G/H)$ concerning $h$. Suppose that $\mathfrak{m}=\mathfrak{m}_1\oplus \mathfrak{m}_2$ is an $Ad(H)$-invariant orthogonal decomposition of $\mathfrak{m}$, such that $[\mathfrak{m}_1,\mathfrak{m}_2]\subseteq \mathfrak{m}_1$ and $X$ belongs to $\mathfrak{m}_2$. Then, $M$ admits a one-parameter family of invariant Finsler metrics $F_{\lambda }=\sqrt [3]{T_{\lambda }}$, where $\lambda \in \mathbb{R}^+$, such that $(M,F_{\lambda })$ is a two-step Finsler g.o. space. Here, $T_{\lambda }=h_{\lambda }.b_{\lambda}$, $h_{\lambda }$ represents the Riemannian metric on $M$ corresponding to the inner product
$$\langle ,\rangle _{\lambda} =\langle ,\rangle \vert_{\mathfrak{m}_1}\oplus \lambda \langle , \rangle\vert_{\mathfrak{m}_2},$$
and $b_{\lambda }$ is a one-form associated with the vector field $X_{\lambda}=\dfrac{1}{\sqrt{\langle X ,X\rangle _{\lambda}}} X$.
\end{theorem}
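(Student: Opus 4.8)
The plan is to mirror the proof of Theorem \ref{main theorem}, replacing the structure theory of Berwald $(\alpha,\beta)$-metrics by that of Berwald cubic metrics supplied by Lemma \ref{cubic metric} together with Brinzei's parallelism criterion. The first step is to observe that the Riemannian metric $h$ appearing in $T=h.b$ is itself naturally reductive, by an argument parallel to Remark \ref{Naturally reductive (alpha ,beta )-space}: since $(M,F)$ is a naturally reductive Finsler space, $F$ is of Berwald type and there is an invariant Riemannian metric $\tilde h$ with $(M,\tilde h)$ naturally reductive (in the Riemannian sense) and Levi-Civita connection of $\tilde h$ equal to the Chern connection of $F$; by Theorem 9 of \cite{Brinzei} the one-form $b$ is parallel with respect to $h$, so the Chern connection of $F$ coincides with the Levi-Civita connection of $h$ as well; hence $h$ and $\tilde h$ share a Levi-Civita connection, and by the rigidity result of \cite{Schmidt} used in Remark \ref{Naturally reductive (alpha ,beta )-space} we get $h=\mu\tilde h$ for a constant $\mu>0$. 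Since scaling by a positive constant preserves natural reductivity, $(M,h)$ is naturally reductive. Consequently, applying Corollary 2.4 of \cite{Arvanitoyeorgos-Panagiotis Souris} to the decomposition $\mathfrak m=\mathfrak m_1\oplus\mathfrak m_2$ with $[\mathfrak m_1,\mathfrak m_2]\subseteq\mathfrak m_1$, the Riemannian space $(M,h_\lambda)$ is a two-step g.o. space for every $\lambda\in\mathbb R^+$.

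Next I would verify that $F_\lambda=\sqrt[3]{T_\lambda}$ with $T_\lambda=h_\lambda.b_\lambda$ is a cubic metric of Berwald type. The normalization is automatic: $b_\lambda$ corresponds to $X_\lambda=\langle X,X\rangle_\lambda^{-1/2}X$ via $h_\lambda$, so $\Vert b_\lambda\Vert^2_{h_\lambda}=\langle X_\lambda,X_\lambda\rangle_\lambda=1$ (and since $X\in\mathfrak m_2$ and $\Vert b\Vert^2_h=\langle X,X\rangle=1$, in fact $X_\lambda=\frac{1}{\sqrt\lambda}X$), so Lemma \ref{cubic metric} applies to $F_\lambda$. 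Because $X_\lambda$ is a positive multiple of $X$, the Berwald conditions of Lemma \ref{cubic metric} for $F_\lambda$ (with respect to $\langle,\rangle_\lambda$ and $X_\lambda$) are equivalent to $\langle[y,z]_{\mathfrak m},X\rangle_\lambda=0$ and $\langle[y,X]_{\mathfrak m},z\rangle_\lambda+\langle[z,X]_{\mathfrak m},y\rangle_\lambda=0$ for all $y,z\in\mathfrak m$. Since $F$ is Berwald, Lemma \ref{cubic metric} already gives these identities for $\langle,\rangle=\langle,\rangle_1$; and using $\langle y,z\rangle_\lambda=\langle y,z\rangle+(\lambda-1)\langle y_2,z_2\rangle$ together with $X\in\mathfrak m_2$ and $[\mathfrak m_1,\mathfrak m_2]\subseteq\mathfrak m_1$, the computation carried out in the proof of Theorem \ref{main theorem} transfers word for word and yields the desired identities for $\langle,\rangle_\lambda$. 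Hence $F_\lambda$ is of Berwald type.

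Finally, since $F_\lambda$ is a Berwald cubic metric with underlying Riemannian metric $h_\lambda$, Theorem 9 of \cite{Brinzei} shows its Chern connection coincides with the Levi-Civita connection of $h_\lambda$, so $(M,F_\lambda)$ and $(M,h_\lambda)$ have the same geodesics; as $(M,h_\lambda)$ is a two-step g.o. space, so is $(M,F_\lambda)$. The step I expect to require the most care is the passage ``$(M,F)$ naturally reductive $\Rightarrow$ $h$ naturally reductive'': one must be sure that, just as for $(\alpha,\beta)$-metrics, a Berwald cubic metric $\sqrt[3]{h.b}$ has its Chern connection equal to the Levi-Civita connection of $h$ — which is precisely what Brinzei's criterion ($F$ Berwald iff $b$ is $h$-parallel) delivers. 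The remaining steps are a transcription of the $(\alpha,\beta)$-case argument.
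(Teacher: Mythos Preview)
Your proposal is correct and follows essentially the same route as the paper's proof: show $(M,h)$ is naturally reductive via the definition of natural reductivity for Finsler spaces and Schmidt's rigidity, invoke Corollary~2.4 of \cite{Arvanitoyeorgos-Panagiotis Souris} for $(M,h_\lambda)$, use Lemma~\ref{cubic metric} together with the $\langle,\rangle_\lambda$-computation from Theorem~\ref{main theorem} to see that $F_\lambda$ is Berwald, and finish via Brinzei. The only cosmetic difference is that the paper reaches ``$h$ and $\tilde h$ have the same Levi-Civita connection'' by first noting (Remark~10 of \cite{Brinzei}) that $(M,F)$ and $(M,h)$ have the same geodesics and then applying Lemma~\ref{tow connection with the same geodesics}, whereas you argue the connection equality directly from $b$ being $h$-parallel.
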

\begin{proof}
Any naturally reductive Finsler space is of Berwald type. Therefore, $F$ is a Berwaldian decomposable cubic metric. Remark 10 in \cite{Brinzei} states that the geodesics of $(M, F)$ and $(M,h)$ coincide. Moreover, $F$ is naturally reductive, meaning that there exists a naturally reductive Riemannian metric $\tilde{h}$ where the Chern connection of $F$ and the Levi-Civita connection of $\tilde{h}$ coincide. Thus, Lemma \ref{tow connection with the same geodesics} implies that $h$ and $\tilde{h}$ have the same connection. Remark \ref{Naturally reductive (alpha ,beta )-space} shows that $(M,h)$ is naturally reductive. By applying Corollary 2.4 in \cite{Arvanitoyeorgos-Panagiotis Souris}, $(M,h_{\lambda})$ is a two-step g.o. space. Since $F$ is of Berwald type, Lemma \ref{cubic metric} and the proof of Theorem \ref{main theorem} imply that $F_{\lambda}$ is a Berwald metric. Therefore, Theorem 9 in \cite{Brinzei} concludes the proof.
\end{proof}
\begin{example}
Consider a Lie group $(G,h)$ equipped with a bi-invariant Riemannian metric. Let $K$ be a connected subgroup of $G$, and let $\mathfrak{g}$ and $\mathfrak{k}$ represent the Lie algebras of $G$ and $K$, respectively. The bi-invariant metric $h$ induces an $Ad$-invariant positive definite inner product $\langle ,\rangle$ on $\mathfrak{g}$. Furthermore, it allows for an orthogonal decomposition of $\mathfrak{g}$ into $\mathfrak{k}$ and $\mathfrak{m}$. This decomposition is $Ad(K)$-invariant. For any $x_{\mathfrak{k}}, y_{\mathfrak{k}}\in \mathfrak{k}$ and $x_{ \mathfrak{m}}\in \mathfrak{m}$, it follows that $\langle [x_{\mathfrak{k}},x_{\mathfrak{m}}], y_{\mathfrak{k}}\rangle = -\langle x_{\mathfrak{m}}, [x_{\mathfrak{k}}, y_{\mathfrak{k}}]\rangle = -\langle x_{\mathfrak{m}}, z_{\mathfrak{k}}\rangle = 0$. Therefore, $ [x_{\mathfrak{k}},x_{\mathfrak{m}}]\in \mathfrak{m}$, and since $K$ is connected, $Ad(K)\mathfrak{m} \subseteq \mathfrak{m}$. Let $F$ be an $(\alpha,\beta)$-metric on $G$ defined by $h$ and a left invariant vector field $X \in \mathfrak{k}\cap Z(\mathfrak{g})$. The Riemannian metric $h$ and the vector field $X$ are both right invariant, making the Finsler metric $F$ bi-invariant. Based on Theorem 5 in \cite{Latifi-Toomanian}, we can conclude that $(G, F)$ is a naturally reductive Finsler space. Now, by applying Theorem \ref{main theorem}, we consider $\langle , \rangle _{\lambda}$:
$$\langle ,\rangle _{\lambda} =\langle ,\rangle \vert_{\mathfrak{m}}\oplus \lambda \langle , \rangle\vert_{\mathfrak{k}}.$$
Hence, we can assert that $(G, F{\lambda})$ is a two-step homogeneous g.o. space.
\end{example}
\begin{example}
Consider a Lie group $G$ with a bi-invariant Riemannian metric $h$ and let $\mathfrak{g}$ denote its Lie algebra. It is known that $\mathfrak{g}= Z(\mathfrak{g})\oplus\mathfrak{g}'$, which represents an orthogonal decomposition of $\mathfrak{g}$ (see \cite{Alexandrino-Bettiol}). Let us introduce an $(\alpha,\beta)$-metric $F$, induced by the Riemannian metric $h$, and a vector field $X \in Z(\mathfrak{g})$ on $G$. Since both $h$ and $X$ are bi-invariant, it follows that the $(\alpha,\beta)$-metric $F$ is also bi-invariant. According to Theorem 5 in \cite{Latifi-Toomanian}, $F$ can be classified as a naturally reductive Finsler metric. Now, let $h_{\lambda}$ be a Riemannian metric on $G$ that corresponds to the inner product $\langle ,\rangle_{\lambda} =\langle ,\rangle \vert_{\mathfrak{g}'}\oplus \lambda \langle , \rangle\vert_{Z(\mathfrak{g})}$ on $\mathfrak{g}$. The Theorem \ref{main theorem} demonstrates that $G$, equipped with the $(\alpha ,\beta )$-metric $F_{\lambda}$ induced by the Riemannian metric $h_{\lambda}$ and the vector $X_{\lambda} \in Z(\mathfrak{g})$, is a Berwald space. Consequently, $F_{\lambda }$ and $h_{\lambda}$ share the same geodesics. Furthermore, according to Theorem 2.3 of \cite{Arvanitoyeorgos-Panagiotis Souris}, $(G, F_{\lambda})$ can be classified as a two-step g.o. space.
\end{example}
The next theorem can be obtained directly from Proposition 5.1 in \cite{Arvanitoyeorgos-Panagiotis Souris} and Theorem \ref{main theorem}.
\begin{theorem}
Consider a Lie group $G$ equipped with a bi-invariant Riemannian metric $h$. Let $H\subset K$ be two closed connected subgroups of $G$. Denote by $\langle,\rangle$ the $Ad$-invariant inner product on the Lie algebra $\mathfrak{g}$, which corresponds to the bi-invariant Riemannian metric $h$. We define $T_o(G/H)=\mathfrak{m}$, $T_o(G/K)=\mathfrak{m}_1$, and $T_o(K/H)=\mathfrak{m}_2$, where $\mathfrak{m}$, $\mathfrak{m}_1$, and $\mathfrak{m}_2$ are subspaces of $\mathfrak{g}$ and $\mathfrak{m}=\mathfrak{m}_1\oplus \mathfrak{m}_2$. Let $F_{\lambda }$ be the $G$-invariant $(\alpha,\beta)$-metric on $G/H$, corresponding to the $Ad(H)$-invariant positive definite inner product
$$\langle , \rangle _{\lambda}=\langle , \rangle \vert_{\mathfrak{m}_1} +\lambda \langle , \rangle \vert_{\mathfrak{m}_2} \qquad \lambda >0$$
on $\mathfrak{m}$, and let $X\in \mathfrak{m}_2$ be a vector field orthogonal to $[\mathfrak{m},\mathfrak{m}]_{\mathfrak{m}}$ with respect to $\langle ,\rangle$. Then, $(G/H ,F_{\lambda})$ is a two-step g.o. space.
\end{theorem}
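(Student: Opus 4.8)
The plan is to verify the three structural hypotheses of Theorem \ref{main theorem} for the data coming from the tower $H \subset K \subset G$, and then simply invoke that theorem. First I would recall from Proposition 5.1 of \cite{Arvanitoyeorgos-Panagiotis Souris} what is actually proved there: in the bi-invariant setting, with $\mathfrak{m}_1 = T_o(G/K)$ and $\mathfrak{m}_2 = T_o(K/H)$ identified as $\langle,\rangle$-orthogonal complements inside $\mathfrak{g}$, the decomposition $\mathfrak{m} = \mathfrak{m}_1 \oplus \mathfrak{m}_2$ is $Ad(H)$-invariant, $\langle,\rangle$-orthogonal, and satisfies the bracket relation $[\mathfrak{m}_1,\mathfrak{m}_2] \subseteq \mathfrak{m}_1$. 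That proposition is the reason the Riemannian metrics $h_\lambda$ attached to $\langle,\rangle_\lambda$ form a two-step g.o.\ family; here it does the work of checking exactly the linear-algebraic hypotheses that Theorem \ref{main theorem} also needs.

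Next I would address the Finsler data. The metric $F_\lambda$ is the $(\alpha,\beta)$-metric built from $h_\lambda$ (equivalently from $\langle,\rangle_\lambda$) together with the vector field $X \in \mathfrak{m}_2$, which by hypothesis is $\langle,\rangle$-orthogonal to $[\mathfrak{m},\mathfrak{m}]_{\mathfrak{m}}$. To apply Theorem \ref{main theorem} verbatim I need a \emph{single} $(\alpha,\beta)$-metric $F$ (with $\lambda = 1$) which is naturally reductive, so that $F_\lambda$ arises as one of its deformations; here $F$ is the bi-invariant $(\alpha,\beta)$-metric on $G/H$ determined by $h$ and $X$. Because $h$ is bi-invariant and $X$ lies in $\mathfrak{m}_2$ and is orthogonal to $[\mathfrak{m},\mathfrak{m}]_{\mathfrak{m}}$, the two identities of Proposition 3.1 of \cite{Bahmandoust-Latifi}, namely $\langle[y,X]_{\mathfrak{m}},z\rangle + \langle[z,X]_{\mathfrak{m}},y\rangle = 0$ and $\langle[y,z]_{\mathfrak{m}},X\rangle = 0$ for all $y,z \in \mathfrak{m}$, hold — the first from $Ad$-invariance of $\langle,\rangle$ combined with orthogonality of $X$ to $[\mathfrak{m},\mathfrak{m}]_{\mathfrak{m}}$, the second directly from that same orthogonality. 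Hence $F$ is of Berwald type, and since on a Lie group with bi-invariant metric a Berwaldian $(\alpha,\beta)$-metric is naturally reductive (this is the content used via Theorem 5 of \cite{Latifi-Toomanian} in the preceding examples), $(G/H,F)$ is naturally reductive.

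With these verifications in hand the conclusion is immediate: the hypotheses of Theorem \ref{main theorem} are met — $(G/H,F)$ naturally reductive with $F$ an invariant $(\alpha,\beta)$-metric from $h$ and $X$, an $Ad(H)$-invariant orthogonal decomposition $\mathfrak{m} = \mathfrak{m}_1 \oplus \mathfrak{m}_2$ with $[\mathfrak{m}_1,\mathfrak{m}_2] \subseteq \mathfrak{m}_1$, and $X \in \mathfrak{m}_2$ — so that theorem produces the one-parameter family $F_\lambda$ with each $(G/H,F_\lambda)$ a two-step Finsler g.o.\ space, where $F_\lambda$ is the $(\alpha,\beta)$-metric attached to $\langle,\rangle_\lambda$ and the rescaled vector field, matching the statement. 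The main obstacle I anticipate is not any computation but a bookkeeping subtlety: making sure that ``$X$ orthogonal to $[\mathfrak{m},\mathfrak{m}]_{\mathfrak{m}}$'' in the bi-invariant setting is genuinely enough to get the Berwald identities on the \emph{deformed} spaces too — but this is exactly what the orthogonality-preservation computation inside the proof of Theorem \ref{main theorem} already handles (the terms with factor $(\lambda-1)$ collapse because $[\mathfrak{m}_1,\mathfrak{m}_2] \subseteq \mathfrak{m}_1$ and $X \in \mathfrak{m}_2$), so no new argument is required beyond citing Proposition 5.1 of \cite{Arvanitoyeorgos-Panagiotis Souris} and Theorem \ref{main theorem}.
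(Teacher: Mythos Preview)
Your plan is essentially the paper's: both rest on Proposition 5.1 of \cite{Arvanitoyeorgos-Panagiotis Souris} for the decomposition $\mathfrak{m}=\mathfrak{m}_1\oplus\mathfrak{m}_2$ with $[\mathfrak{m}_1,\mathfrak{m}_2]\subseteq\mathfrak{m}_1$ and for the fact that $(G/H,h_\lambda)$ is two-step g.o., and both use the Berwald-identity computation from the proof of Theorem~\ref{main theorem} to conclude that $F_\lambda$ is Berwald and hence shares geodesics with $h_\lambda$. The one packaging difference is that the paper does \emph{not} build a base naturally reductive $(G/H,F)$ and invoke Theorem~\ref{main theorem} as a black box; it simply verifies the two identities $\langle[y,X]_{\mathfrak{m}},z\rangle+\langle[z,X]_{\mathfrak{m}},y\rangle=0$ and $\langle[y,z]_{\mathfrak{m}},X\rangle=0$ directly for $\langle,\rangle$ (from $Ad$-invariance and $X\perp[\mathfrak{m},\mathfrak{m}]_{\mathfrak{m}}$) and then for $\langle,\rangle_\lambda$ by the same $(\lambda-1)$-collapse you describe.

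One genuine wobble in your write-up: the appeal to Theorem~5 of \cite{Latifi-Toomanian} does not apply here, since that result concerns bi-invariant Finsler metrics on a Lie \emph{group}, not on a quotient $G/H$ with $H$ nontrivial. The conclusion you want is still correct, but for a different reason: once you have shown $F$ is Berwald, its Chern connection coincides with the Levi-Civita connection of $h$, and $(G/H,h)$ is naturally reductive because $h$ comes from a bi-invariant metric; Definition~\ref{definition Deng and Hou} then gives natural reductivity of $(G/H,F)$ directly. Alternatively, you can bypass natural reductivity altogether as the paper does, which is slightly cleaner and also avoids the bookkeeping about rescaling $X$ versus $X_\lambda$ built into the statement of Theorem~\ref{main theorem}.
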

\begin{proof}
Let $\mathfrak{g}$, $\mathfrak{k}$, and $\mathfrak{h}$ denote the Lie algebras of the Lie groups $G$, $K$, and $H$, respectively. Given the $Ad$-invariant inner product $\langle,\rangle$, we can identify orthogonal decompositions of $\mathfrak{g}$, $\mathfrak{k}$, and $\mathfrak{h}$ as follows: $\mathfrak{g}=\mathfrak{k}\oplus \mathfrak{m}_1$ and $\mathfrak{k}=\mathfrak{h} \oplus \mathfrak{m}_2$. Notice that $\mathfrak{m}=\mathfrak{m}_1\oplus \mathfrak{m}_2$ serves as an orthogonal decomposition with respect to $\langle , \rangle\vert_{\mathfrak{m}}$, where $[\mathfrak{m}_1,\mathfrak{m}_2]\subseteq \mathfrak{m}_2$. Let $h_{\lambda}$ be the Riemannian metric on $G/H$ derived from $\langle , \rangle_{\lambda}$. By the proof of Proposition 5.1 in \cite{Arvanitoyeorgos-Panagiotis Souris}, we deduce that $(G/H,h_{\lambda})$ qualifies as a two-step g.o. space. Given that $\langle,\rangle$ is an $Ad$-invariant inner product, if $X\in [\mathfrak{m},\mathfrak{m}]^{\perp }_{\mathfrak{m}}$, it follows that
$$\langle [y,X]_{\mathfrak{m}},z\rangle +\langle [z,X]_{\mathfrak{m} },y\rangle=0, \qquad \langle [y,z]_{\mathfrak{m}},X\rangle=0 \quad \forall y,z\in \mathfrak{m}.$$
Furthermore, if we additionally consider $X\in \mathfrak{m}_2$, employing a similar calculation as in the proof of Theorem \ref{main theorem}, we ascertain that
$$\langle [y,X]_{\mathfrak{m} },z\rangle_{\lambda} +\langle [z,X]_{\mathfrak{m} },y\rangle_{\lambda }=0, \qquad \langle [y,z]_{\mathfrak{m} },X\rangle_{\lambda}=0 \quad \forall y,z\in \mathfrak{m}.$$
Assume that $F_{\lambda }$ denotes the $(\alpha,\beta)$-metric associated with the inner product $\langle,\rangle_{\lambda }$ and the vector $X\in\mathfrak{m}_2\cap [\mathfrak{m} ,\mathfrak{m}]^{\perp}$. Consequently, $F_{\lambda}$ qualifies as a Berwald type metric, and both the Finsler metric $F_{\lambda}$ and the Riemannian metric $h_{\lambda }$ share the same geodesics.
\end{proof}
Finally, we present a family of two-step geodesic orbit spaces through an extension of the concept of navigation data in Randers metrics \cite{Huang}. Let $(M, F)$ be a Finsler space and $W$ be a vector field such that, for all $x\in M$, $F(x, W(x))<1$. A Finsler metric $\tilde{F}$ is said to possess the navigation data $(F,W)$ if and only if
$$\tilde{F}(x,y)=1 \qquad \Longleftrightarrow \qquad F(x,y-W(x))=1.$$
\begin{theorem}
Let $(M=G/H, F)$ be a two-step geodesic orbit space and $W$ be a $G$-invariant Killing vector field on $M$. Consider the Finsler space $(M,\tilde{F})$ equipped with the navigation data $(W, F)$. Then every geodesic of $(M,\tilde{F})$ is a two-step homogeneous geodesic.
\end{theorem}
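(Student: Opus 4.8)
The plan is to transport the two-step homogeneous geodesics of $(M,F)$ over to $(M,\tilde F)$ by means of the flow of $W$, using the geodesic correspondence for Zermelo navigation. First, $\tilde F$ is globally well defined because $F(x,W(x))$ is independent of $x$ (both $F$ and $W$ are $G$-invariant), so the condition $F(x,W(x))<1$ holds everywhere. Since $W$ is a Killing field of $F$, I would invoke the Finsler navigation principle (\cite{Huang}): the flow $\varphi_t$ of $W$ gives a bijection between the unit-speed geodesics of $F$ and those of $\tilde F$, via $\tilde\gamma(t)=\varphi_t(\gamma(t))$. Thus, given a geodesic $\tilde\gamma$ of $\tilde F$ with $\tilde\gamma(0)=o$ (which may be taken to have unit speed, since rescaling the parameter of a two-step homogeneous geodesic through $o$ yields again such a geodesic), the curve $\gamma(t):=\varphi_{-t}(\tilde\gamma(t))$ is a geodesic of $F$ through $o$, and the hypothesis provides $x,y\in\mathfrak g$ with $\gamma(t)=\pi(\exp(tx)\exp(ty))$.

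Next I would identify the flow $\varphi_t$. Since $W$ is $G$-invariant, $\varphi_t$ commutes with the $G$-action on $M$, hence $\varphi_t(\pi(g))=g\cdot\varphi_t(o)$ for every $g\in G$; and with $w:=W(o)\in\mathfrak m$, a short computation (using $G$-invariance of $W$ once more) shows that $t\mapsto\pi(\exp(tw))$ is the integral curve of $W$ through $o$, so $\varphi_t(o)=\pi(\exp(tw))$. Consequently
$$\tilde\gamma(t)=\varphi_t\big(\exp(tx)\exp(ty)\cdot o\big)=\exp(tx)\exp(ty)\cdot\pi(\exp(tw))=\pi\big(\exp(tx)\exp(ty)\exp(tw)\big).$$

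The step I expect to be the main obstacle is to rewrite this three-factor product in the two-factor form $\pi(\exp(t\,\cdot)\exp(t\,\cdot))$ required by the definition. For this I would enlarge the group: set $\hat G=G\times\mathbb R$ acting on $M$ by $(g,s)\cdot p=g\cdot\varphi_s(p)$. The two actions commute exactly because $W$ is $G$-invariant, the action is transitive, and it is by isometries of $\tilde F$ (an isometry of $F$ commuting with the flow of $W$ preserves $\tilde F$), so $(M,\tilde F)$ is $\hat G$-homogeneous. In $\hat{\mathfrak g}=\mathfrak g\oplus\mathbb R\zeta$ the generator $\zeta$ of the second factor commutes with all of $\mathfrak g$ and $\varphi_t=\exp_{\hat G}(t\zeta)$, so
$$\tilde\gamma(t)=\exp_{\hat G}(tx)\exp_{\hat G}(ty)\exp_{\hat G}(t\zeta)\cdot o=\exp_{\hat G}\big(t(x+\zeta)\big)\exp_{\hat G}(ty)\cdot o,$$
using $[\zeta,x]=[\zeta,y]=0$; hence $\tilde\gamma$ is a two-step homogeneous geodesic, with geodesic vectors $x+\zeta$ and $y$. (If $W$ comes from a central element of $\mathfrak g$, as in the examples above, one can remain inside $G$ itself.) The remaining points to be careful about are the precise statement of the navigation geodesic correspondence at the level of generality needed here, and completeness of $W$ — the latter being automatic, since $\varphi_t$ is right translation by $\exp(tw)$.
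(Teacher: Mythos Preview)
Your proposal is correct and follows essentially the same route as the paper: both use the navigation geodesic correspondence to write a $\tilde F$-geodesic as the flow of $W$ applied to an $F$-geodesic $\pi(\exp(tx)\exp(ty))$, enlarge the isometry group to include the flow (your $\hat G=G\times\mathbb R$ versus the paper's group $G'$ generated by $G$ and $\phi_t$), observe that $G$-invariance of $W$ forces the new generator to be central, and then merge it with one exponential factor. The only cosmetic difference is that the paper invokes Theorem~6.12 of \cite{Yan-Deng} directly in the enlarged group (obtaining $\exp(tW_0)$ on the left), whereas you first compute the flow explicitly as $\varphi_t(\pi(g))=\pi(g\exp(tw))$ inside $G$ and then pass to $\hat G$; the combinatorics with the central element are the same either way.
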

\begin{proof}
Let $\phi_t$ denote the flow of the Killing vector field $W$. Suppose $G'$ is the group generated by $\phi _t$ and $G$. If $H'$ is the isotropy subgroup of $G'$ at $o=eH$, then we have $H\subset H'$ and $\mathfrak{g'}=\mathfrak{h'}\oplus \mathfrak{m}$ represents a reductive decomposition of $G'/H'$. Let $W_0$ be the element corresponding to $\phi _t$ in $\mathfrak{g'}$. By virtue of Theorem 6.12 in \cite{Yan-Deng}, every geodesic $\gamma $ of $(M,\tilde{F})$ passing through $p$ has the following form:
$$\gamma (t)=\exp(tW_0).\rho (t)$$
where $\rho(t)= \exp (tx)\exp (ty).p$ denotes a geodesic of $(M,F)$. Additionally, since $W$ is $G$-invariant, it follows that $W_0$ belongs to the center of $\mathfrak{g'}$. Consequently,
$$\gamma (t)=\exp(tW_0)\exp (tx)\exp (ty).p=\exp(t(W_0+x))\exp (ty).p$$
which concludes the proof.
\end{proof}
\begin{cor}
Consider $(M=G/H,h)$ as a Riemannian two-step geodesic orbit space, with $W$ as a $G$-invariant Killing vector field on $M$. If $\Vert W\Vert _h<1$, then $(M, F)$, where $F$ is a Randers metric with navigation data $(h, W)$, also qualifies as a two-step geodesic orbit space.
\end{cor}
\begin{cor}
Let $(M=G/H, h)$ be a Riemannian two-step geodesic orbit space, and $W$ be a $G$-invariant Killing vector field on $M$. If $F$ represents a Kropina metric with navigation data $(h,\dfrac{W}{\Vert W\Vert_h})$, then $(M, F)$ is a two-step geodesic orbit space as well.
\end{cor}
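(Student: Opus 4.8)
The plan is to obtain the corollary as an instance of the previous Theorem, with the Riemannian two-step g.o.\ space $(M,h)$ playing the role of the Finsler space $(M,F)$ there and the vector field $V:=W/\Vert W\Vert_h$ playing the role of $W$. Then the Finsler metric with navigation data $(h,V)$ is exactly the given Kropina metric $F$, and the conclusion of that Theorem is precisely that every geodesic of $(M,F)$ through the origin is two-step homogeneous. Two points must be checked for this to be legitimate.

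First, $V$ must be a $G$-invariant Killing vector field of $(M,h)$. Here homogeneity enters: since $G$ acts transitively on $M$ by $h$-isometries and $W$ is $G$-invariant, the function $x\mapsto h_x(W_x,W_x)$ is constant on $M$, say $\Vert W\Vert_h\equiv c>0$; hence $V=\tfrac{1}{c}W$ is a nonzero constant multiple of the Killing field $W$, so it is again $G$-invariant and Killing, with $\Vert V\Vert_h\equiv 1$. In particular $(h,V)$ is admissible navigation data of Kropina type.

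Second, one needs the geodesic factorization underlying the previous Theorem to remain valid in this non-regular situation. That proof rests on the navigation/geodesic correspondence (Theorem 6.12 of \cite{Yan-Deng}): for a geodesic $\gamma$ of the navigation metric with $\gamma(0)=o$ one has $\gamma(t)=\exp(tW_0)\cdot\rho(t)$, where $\rho$ is a geodesic of the base space and $W_0$ is the element generating the flow of the navigation field in the enlarged Lie algebra $\mathfrak{g}'$ (notation as in the proof of the previous Theorem); the $G$-invariance of $W$, hence of $V$, hence of $W_0$, then places $W_0$ in the centre of $\mathfrak{g}'$, and writing $\rho(t)=\exp(tx)\exp(ty)\cdot o$---possible since $(M,h)$ is two-step g.o.---yields
$$\gamma(t)=\exp(tW_0)\exp(tx)\exp(ty)\cdot o=\exp\bigl(t(W_0+x)\bigr)\exp(ty)\cdot o,$$
so $\gamma$ is two-step homogeneous. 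The obstacle is that here $\Vert V\Vert_h\equiv 1$, so the hypothesis $F(x,W(x))<1$ of the previous Theorem fails and Theorem 6.12 of \cite{Yan-Deng} cannot be quoted as stated; instead one must invoke the extension of the Zermelo navigation formalism to Kropina metrics, as in \cite{Huang} (together with the Kropina analysis of \cite{Hosseini-Salimi Moghaddam}), to recover the same factorization. I expect this transfer to the degenerate unit-norm (Kropina) case to be the only real difficulty; once it is granted, the displayed computation finishes the proof exactly as for the Randers corollary.
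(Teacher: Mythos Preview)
Your approach is correct and matches the paper's intent: the corollary is stated in the paper without proof, as an immediate consequence of the preceding navigation Theorem, and your argument is exactly the unwinding of that Theorem's proof with $(M,h)$ as the base space and $V=W/\Vert W\Vert_h$ as the navigation field. Your observation that $\Vert W\Vert_h$ is constant on $M$ (by $G$-invariance and transitivity) is the right way to see that $V$ is again Killing and $G$-invariant, and your identification of the one genuine subtlety---that $\Vert V\Vert_h\equiv 1$ forces one to use the Kropina version of the navigation--geodesic correspondence (via \cite{Huang} and \cite{Hosseini-Salimi Moghaddam}) rather than the regular case cited in the Theorem's proof---is more careful than the paper itself, which leaves this point implicit.
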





\begin{thebibliography}{99}

\bibitem{An-Deng Monatsh} H. An, S. Deng, \emph{Invariant $(\alpha,\beta)$-metrics on homogeneous manifolds}, Monatsh. Math., \textbf{154}(2008), 89--102.

\bibitem{Alexandrino-Bettiol} M. M. Alexandrino and R. G. Bettiol, \emph{Lie groups and geometric aspects of isometric actions},  Springer International Publishing Switzerland, New York, 2015.

\bibitem{Arvanitoyeorgos-Panagiotis Souris} A. Arvanitoyeorgos and N. Panagiotis Souris, \emph{Two-step homogeneous geodesics in homogeneous spaces},  Taiwanese J. Math., \textbf{20(6)}(2016), 1313--1333.

\bibitem{Bahmandoust-Latifi} P. Bahmandoust and D. Latifi, \emph{Naturally reductive homogeneous $(\alpha,\beta)$ spaces}, Int. J. Geom. Methods Mod. Phys., \textbf{17(8)}(2020), 2050117.

\bibitem{Berestovskii-Nikonorov} V. Berestovskii and Y. Nikonorov, \emph{Riemannian Manifolds and Homogeneous Geodesics},  Springer, Nature Switzerland, 2020.

\bibitem{Brinzei} N. Brinzei (Voicu), \emph{On Cubic Berwald Spaces}, Bull. Calcutta Math. Soc., \textbf{17}(1-2)(2009), 75--84.

\bibitem{Deng-Hou} S. Deng and Z.  Hou, \emph{Invariant Finsler metrics on homogeneous manifolds},  J. Phys. A: Math. Gen., \textbf{37}(2004),  8245--8253.

\bibitem{Dusek1} Z. Dusek, \emph{The existence of homogeneous geodesics in homogeneous pseudo-Riemannian and affine manifolds},  J. Geom. Phys., \textbf{60}(2010), 687--689.

\bibitem{Dusek2} Z. Dusek, \emph{The affine approach to homogeneous geodesics in homogeneous Finsler spaces},  Arch. Math. (Brno), \textbf{54}(2018), 127--133.

\bibitem{Dusek3} Z. Dusek, \emph{The existence of homogeneous geodesics in special homogeneous Finsler spaces},  Mat. Vesnik, \textbf{71}(2019), 16--22.

\bibitem{Hosseini-Salimi Moghaddam} M. Hosseini and H. R. Salimi Moghaddam, \emph{On the Existence of Homogeneous Geodesics in Homogeneous Kropina Spaces},  Bull. Iranian Math. Soc., \textbf{46}(2020), 457--469.

\bibitem{Huang}  L. Huang, \emph{On geodesics of Finsler metrics via navigation problem}, Proc. Amer. Math. Soc., \textbf{139(8) }(2011), 3015--3024.

\bibitem{Kaplan}  A. Kaplan, \emph{On the geometry of groups of Heisenberg type}, Bull. Lond. Math. Soc., \textbf{15}(1983), 35--42.

\bibitem{Kobayashi-Nomizu} S. Kobayashi, K. Nomizu, \emph{Foundations of Differential Geometry},  Vol II, John Wiley \& Sons, New York, 1969.

\bibitem{Kowalski-Vanhecke} O. Kowalski and L. Vanhecke, \emph{Riemannian manifolds with homogeneous geodesics},  Boll. Unione Mat. Ital., \textbf{1} 5-B (1991), 189--246.

\bibitem{Kowalski-Szenthe} O. Kowalski and J. Szenthe, \emph{On the existence of homogeneous geodesics in homogeneous Riemannian manifolds}, Geom. Dedicata, \textbf{81}(2000), 209--214

\bibitem{Latifi-Toomanian} D.  Latifi and M. Toomanian, \emph{On the existence of bi-invariant Finsler metrics on Lie groups},   Math. Sci., \textbf{7}(37)  (2013).

\bibitem{Liu-Deng} H. Liu, S. Deng, \emph{Homogeneous $(\alpha,\beta)$-metrics of Douglas type}, Forum Math., \textbf{27}(2015), 3149--3165.

\bibitem{Salimi-Parhizkar} M.  Parhizkar and H. R. Salimi Moghaddam, \emph{Geodesic vector fields of invariant $(\alpha , \beta)$-metrics on homogeneous spaces}, Int. Electron. J. Geom., \textbf{6(2)}(2013), 39--44.

\bibitem{Schmidt} B. G. Schmidt, \emph{Conditions on a connection to be a metric connection}, Comm. Math. Phys., \textbf{29}(1973), 55--59.

\bibitem{Yan-Deng} Z. Yan and S. Deng, \emph{Finsler spaces whose geodesics are orbits},  Differential Geom. Appl., \textbf{36}(2014), 1--23.

\bibitem{Yan-Deng1} Z. Yan and S. Deng, \emph{Existence of homogeneous geodesics on homogeneous Randers spaces},  Houston J. Math., \textbf{44}(2018), 481--493.

\bibitem{Yan-Huang} Z. Yan and L. Huang, \emph{On the existence of homogeneous geodesic in homogeneous Finsler spaces}, J. Geom. Phys., \textbf{124}(2018), 264--267.

\end{thebibliography}
\end{document}